\DeclareMathOperator{\GL}{GL}\DeclareMathOperator{\PSL}{PSL}
\DeclareMathOperator{\SL}{SL}
\DeclareMathOperator{\D}{D}
\DeclareMathOperator{\DM}{F}\DeclareMathOperator{\word}{w}
\DeclareMathOperator{\sub}{s}
\DeclareMathOperator{\sol}{sol}
\DeclareMathOperator{\nil}{nil}
\DeclareMathOperator{\rk}{rk}
\DeclareMathOperator{\Z}{\mathbb{Z}}
\DeclareMathOperator{\normF}{F}
\begin{document}
\bibliographystyle{plain}


\title{Approximating a group by its solvable quotients}
\author{Khalid Bou-Rabee}
\address{Khalid Bou-Rabee, Department of Mathematics, The University of Michigan, 
2074 East Hall, Ann Arbor, MI 48109-1043}
\email{khalidb@umich.edu}
\keywords{residual finiteness growth, nilpotent, residually finite, solvable, soluble.}
\subjclass{20E26}

\newtheorem{theorem}{Theorem}
\newtheorem{claim}{Claim}
\newtheorem{conjecture}[theorem]{Conjecture}
\newtheorem{corollary}[theorem]{Corollary}
\newtheorem{lemma}[theorem]{Lemma}
\newtheorem{question}{Question}
\newtheorem*{remark}{Remark}
\newtheorem{ques}{Question}
\newtheorem*{theoremNL}{Theorem}
\newtheorem{example}{Example}

\newcommand{\innp}[1]{\left< #1 \right>}
\newcommand{\abs}[1]{\left\vert#1\right\vert}
\newcommand{\set}[1]{\left\{#1\right\}}
\newcommand{\norm}[1]{\left\vert \left\vert #1\right\vert\right\vert}
\newcommand{\brac}[1]{\left[#1\right]}
\newcommand{\pr}[1]{\left( #1 \right) }
\newcommand{\su}{\subset}


\def\mf#1{\mathfrak{#1}}
\def\mc#1{\mathcal{#1}}
\def\mb#1{\mathbb{#1}}
\def\tx#1{{\rm #1}}
\def\tb#1{\textbf{#1}}
\def\ti#1{\textit{#1}}
\def\ts#1{\textsf{#1}}
\def\tr{\tx{tr}\,}
\def\R{\mathbb{R}}
\def\C{\mathbb{C}}
\def\Q{\mathbb{Q}}
\def\A{\mathbb{A}}
\def\Z{\mathbb{Z}}
\def\N{\mathbb{N}}
\def\F{\mathbb{F}}
\def\lmod{\setminus}
\def\Gal{\rm{Gal}}
\def\ol#1{\overline{#1}}
\def\ul#1{\underline{#1}}
\def\sp{\tx{Spec}}
\def\rk{\tx{rk}}
\def\Ad{\tx{Ad}}
\def\hat{\widehat}
\def\vol{\tx{vol}}
\def\conj#1{\ \underaccent{#1}{\sim}\ }
\def\rw{\rightarrow}
\def\lrw{\longrightarrow}
\def\hrw{\hookrightarrow}
\def\thrw{\twoheadrightarrow}
\def\lw{\leftarrow}
\def\sm{\smallsetminus}


\def\tr{\mbox{\rm{tr}}}
\def\Hom{\mbox{\rm{Hom}}}
\def\Ram{\mbox{\rm{Ram}}}
\def\PSL{\mbox{\rm{PSL}}}
\def\P{\mbox{\rm{P}}}
\def\d{\mbox{\rm{d}}}
\def\SL{\mbox{\rm{SL}}}
\def\SO{\mbox{\rm{SO}}}
\def\SU{\mbox{\rm{SU}}}
\def\PSU{\mbox{\rm{PSU}}}
\def\PGL{\mbox{\rm{PGL}}}
\def\rk{\mbox{\rm{rk}}}
\def\GL{\mbox{\rm{GL}}}
\def\Gal{\mbox{\rm{Gal}}}
\def\isomorphic{\cong}
\def\A{\cal A}
\def\det{\mbox{\rm{det}}}
\def\cusp{\mbox{\rm{cusp}}}
\def\Vol{\mbox{\rm{Vol}}}
\def\min{\mbox{\rm{min}}}
\def\dim{\mbox{\rm{dim}}}
\def\Stab{\mbox{\rm{Stab}}}
\def\mod{\mbox{mod}}
\def\demo{ {\bf Proof.} }
\def\H{{\bf H}^3}


\begin{abstract}
The solvable residual finiteness growth of a group quantifies how well approximated the group is by its finite solvable quotients. In this note we present a new characterization of polycyclic groups which are  virtually nilpotent. That is, we show that a group has solvable residual finiteness growth which is at most polynomial in $\log(n)$ if and only if the group is polycyclic and virtually nilpotent. We also give new results concerning approximating oriented surface groups by nilpotent quotients. As a consequence of this, we prove that a natural number $C$ exists so that any nontrivial element of the $Ck$th term of the lower central series of a finitely generated oriented surface group must have word length at least $k$. Here $C$ depends only on the choice of generating set. Finally, we give some results giving new lower bounds for the solvable residual finiteness growth of some metabelian groups (including the Lamplighter groups).
\end{abstract}
\maketitle

\section*{Introduction}

Let $B_{\Gamma, \mc{X}} (n)$ denote the metric ball of radius $n$ in a group $\Gamma$ generated by a finite set $\mc{X}$ with respect to the word metric $\| \cdot \|_{\Gamma, \mc{X}}$.
In this article we study the following question:

\begin{ques} \label{quantifyresidualfinitenessquestion}
Let $P$ be a property of groups.
How large a finite group with property P do we need to detect elements in $B_{\Gamma, \mc{X}}(n)$? 
That is, what is the smallest integer $\normF^{P}_{\Gamma,X}(n)$ such that each nontrivial element in $B_{\Gamma,X}(n)$ survives through some homomorphism to a group with property $P$ of cardinality no greater than $\normF^{P}_{\Gamma, X}(n)$?
\end{ques}

We will be focusing on two properties: nilpotent ($P= \nil$) and solvable ($P = \sol$).
The asymptotic growth of $F_\Gamma^{\sol}$ is called the \emph{solvable residual finiteness growth} of $\Gamma$, while the asymptotic growth of $F_\Gamma^{\nil}$ is called the \emph{nilpotent residual finiteness growth}.
When the property $P$ is relaxed, we use the notation $\normF_\Gamma$ and call the growth the \emph{normal residual finiteness growth}.

It is known that any virtually nilpotent group has normal residual finiteness growth which is at most polynomial in $\log(n)$ \cite{B10}. Further, any finitely generated linear group with normal residual finiteness growth which is polynomial in $\log(n)$ is virtually nilpotent \cite{BM1}. The author has been unable to find any group that is not  virtually nilpotent with such growth.
Hence, this has lead the author to believe that there may be a positive answer to the following question.

\begin{ques} \label{MainConjecture}
Is it true that if a group $\Gamma$ has normal residual finiteness growth which is at most polynomial in $\log(n)$, then $\Gamma$ is virtually nilpotent?
\end{ques}

Our first result is the following, which resolves the question for a large class of groups. Our proof builds off of the methods in \cite{BM1} (c.f. Theorem 1.1 in that paper).

\begin{theorem} \label{MainTheorem}
If $\Gamma$ has a finite-index subgroup that is residually solvable and finitely generated then the following are equivalent:
\begin{itemize}
\item $\Gamma$ has normal residual finiteness growth which is at most polynomial in $\log(n)$
\item $\Gamma$ is virtually nilpotent.
\end{itemize}
\end{theorem}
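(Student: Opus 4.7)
The backward direction is established in \cite{B10}, so I focus on the forward direction, adapting the strategy of \cite{BM1}. Since finite-index subgroups inherit polynomial-in-$\log(n)$ normal residual finiteness growth (with constants adjusted by the index) and virtual nilpotence is a commensurability invariant, I may replace $\Gamma$ by its given finite-index residually solvable subgroup $H$. It suffices to show that a finitely generated residually solvable group $H$ with polynomial-in-$\log(n)$ normal residual finiteness growth is virtually nilpotent. My plan is first to upgrade this bound to polynomial-in-$\log(n)$ \emph{solvable} residual finiteness growth, and then invoke the paper's companion characterization identifying such groups as polycyclic and virtually nilpotent.

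For the upgrade, fix $g \in B_{H, \mc{X}}(n) \setminus \{1\}$, and let $N$ be a finite-index normal subgroup with $g \notin N$ and $|H/N| \leq (\log n)^c$, furnished by the hypothesis. Residual solvability yields a least integer $k^*$ with $g \notin H^{(k^*)}$. The quotient $H/(NH^{(k^*)})$ is simultaneously finite (being a quotient of $H/N$) and solvable (being a quotient of $H/H^{(k^*)}$), of cardinality at most $(\log n)^c$. The relevant classical bound is that any finite solvable group $Q$ has derived length at most $\log_2|Q|$. Coordinating this bound with the descending derived series of $H/N$, the fact that $g \in H^{(k^*-1)}$, and, if necessary, first replacing $H/N$ by its largest solvable quotient, one verifies that $g$ survives in this joint quotient. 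This gives the desired polynomial-in-$\log(n)$ solvable residual finiteness growth for $H$, whence by the companion characterization $H$, and so $\Gamma$, is virtually nilpotent.

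The main obstacle I anticipate is the following. The finite quotient $H/N$ supplied by polynomial-in-$\log(n)$ normal residual finiteness growth need not be solvable, and the image of $g$ in $H/N$ could lie in its perfect core; in that case, passing to the largest solvable quotient of $H/N$ annihilates $g$, and $H/(NH^{(k^*)})$ fails to detect $g$ as well. The residually solvable hypothesis is the essential lever for excluding this scenario, by compelling the finite quotients of $H$ to be compatible with its derived-series filtration. Making this compatibility quantitative --- and keeping the resulting solvable quotient of polynomial-in-$\log(n)$ size --- is where the bulk of the technical work should reside.
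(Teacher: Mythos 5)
Your reduction to the finite-index residually solvable subgroup $H$ via Lemma \ref{DivisibilityAsymptoticLemma} matches the paper, and the backward direction via \cite{B10} is right. But your forward direction has two serious problems.

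First, it is circular. The ``companion characterization'' you plan to invoke---that polynomial-in-$\log(n)$ solvable residual finiteness growth implies polycyclic and virtually nilpotent---is Theorem \ref{SecondaryTheorem}, and its proof in the paper \emph{uses} Theorem \ref{MainTheorem}: the step ``$\Gamma$ has solvable RF growth polynomial in $\log(n)$ $\Rightarrow$ $\Gamma$ is virtually nilpotent'' is established precisely by citing Theorem \ref{MainTheorem} (since solvable RF growth dominates normal RF growth and solvable-detectability forces residual solvability). There is no independent route to Theorem \ref{SecondaryTheorem} in the paper; your plan therefore assumes what it sets out to prove.

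Second, the ``upgrade'' step from polynomial-in-$\log(n)$ normal RF growth to polynomial-in-$\log(n)$ solvable RF growth is not established, and you correctly identify the failure mode yourself: the small finite quotient $H/N$ detecting $g$ may send $g$ into its perfect core, in which case neither $H/N$'s solvable residual nor the joint quotient $H/(N H^{(k^*)})$ sees $g$. Residual solvability guarantees \emph{some} solvable quotient detects $g$, but gives no a priori control on its size relative to $|H/N|$; that is exactly the quantitative content one would need, and your sketch does not supply it. In fact, this upgrade is equivalent to the theorem rather than a step toward it---once the theorem is known, $H$ is virtually nilpotent and (by \cite{LM91}) solvable, so Corollary \ref{MainCorollary} yields the upgrade; but one cannot get it first.

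The paper takes a genuinely different route: if $H$ is finitely generated, residually solvable, and \emph{not} virtually nilpotent, then Wilson's theorem (\cite{Wi05}, Theorem 1.1) forces its word growth to exceed $\exp\exp([\log n]^{1/3})$ infinitely often, and Lemma \ref{GrowthTheorem} (the inequality linking word growth, normal subgroup growth via \cite[Cor.\ 2.8]{LS03}, and $\DM_\Gamma$) shows such word growth is incompatible with $\DM_{H}(n) \preceq (\log n)^r$. No passage through solvable RF growth is needed. To repair your proposal you would need to replace the appeal to Theorem \ref{SecondaryTheorem} with an argument of this kind, working directly from word growth.
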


\noindent
We also record the following result, which resolves the question for solvable groups, while providing a new characterization of solvable groups which are virtually nilpotent. 
Loosely speaking, the result shows that, except in the ``obvious'' cases, there is a new universal lower bound on the difficulty of detecting elements in finite solvable quotients.

\begin{theorem} \label{SecondaryTheorem}
Let $\Gamma$ be finitely generated. Then the following are equivalent:
\begin{itemize}
\item $\Gamma$ has solvable residual finiteness growth that is polynomial in $\log(n)$ 
\item $\Gamma$ is solvable and virtually nilpotent
\end{itemize}
\end{theorem}

\noindent
The proofs of Theorems \ref{MainTheorem} and \ref{SecondaryTheorem} are in Section \ref{ProofSection}. In our proofs we use in an essential way the results of J.~S.~Wilson \cite{Wi05}.

On the other end of the growth spectrum, the first Grigorchuk group is known to have solvable, normal, and nilpotent residual finiteness growth which is exponential \cite{B10}. Our next main result, proved in Section \ref{freegroupsection}, shows that the fundamental group of an oriented surface has super-polynomial, but not super-exponential, nilpotent residual finiteness growth.
The proof of the upper bound relies on the structure theory of the group $\PSL_2( \Z[i])$, while the lower bound uses a construction due to B.~Bandman, G-M Greuel, F.~Grunewald, B.~Kunyavskii, G.~Pfsiter, and E.~Plotkin \cite{B06}.
 
\begin{theorem} \label{freegrouptheorem}
Let $\Gamma$ be the fundamental group of an oriented surface.
Then
$$
2^n \succeq \normF^{\nil}_{\Gamma} (n) \succeq 2^{\sqrt{n}}.
$$
\end{theorem}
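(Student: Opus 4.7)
The plan is to prove the two bounds by rather different techniques.

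\emph{Upper bound $\normF^{\nil}_\Gamma(n) \preceq 2^n$.} I would use a faithful representation $\Gamma \hookrightarrow \PSL_2(\Z[i])$, available by the arithmetic construction of Fuchsian subgroups of Bianchi groups. Under such an embedding, any $g \in \Gamma$ with $\|g\| \leq n$ is realized by a matrix whose entries in $\Z[i]$ are bounded in absolute value by $C^n$ for a constant $C$ depending only on the generating set. The key structural fact is that for each prime ideal $\mathfrak p$ of $\Z[i]$ the successive quotients $\Gamma(\mathfrak p^j)/\Gamma(\mathfrak p^{j+1})$ of the principal congruence subgroups are abelian, so $\Gamma(\mathfrak p)/\Gamma(\mathfrak p^k)$ is a finite $p$-group of order $p^{O(k)}$. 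Choosing $\mathfrak p$ of small norm and $k$ comparable to $n$ so that $g$ becomes nontrivial modulo $\mathfrak p^k$, and extracting from $\Gamma/(\Gamma \cap \Gamma(\mathfrak p^k))$ its maximal nilpotent quotient, one produces a nilpotent quotient of $\Gamma$ of size $2^{O(n)}$ in which $g$ survives. The main obstacle is that when $g$ is nontrivial already modulo $\mathfrak p$, the almost-simple top $\PSL_2(\Z[i]/\mathfrak p)$ prevents the congruence quotient from being nilpotent; this is handled by varying $\mathfrak p$, or by working inside the pro-$p$ completion $\PSL_2(\Z[i]_\mathfrak p)$ whose principal congruence filtration is genuinely nilpotent.

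\emph{Lower bound $\normF^{\nil}_\Gamma(n) \succeq 2^{\sqrt n}$.} For this I would invoke the recursive word construction of Bandman, Greuel, Grunewald, Kunyavskii, Pfister, and Plotkin \cite{B06}. They produce words $w_n \in F_2$ of length $|w_n| \leq C \cdot 4^n$ which are nontrivial in the free group but are identities on every finite solvable group of derived length at most $n$. Since $\Gamma$ contains a quasi-isometrically embedded nonabelian free subgroup $F_2$, each $w_n$ gives a nontrivial element of $\Gamma$ of word length $O(4^n)$. Now let $\phi \colon \Gamma \to G$ be a homomorphism to a finite nilpotent group with $\phi(w_n) \neq 1$. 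Then $\phi(F_2) \leq G$ is a finite nilpotent group failing the identity $w_n$, so its derived length is at least $n+1$. Because a nilpotent group of class $c$ is solvable of derived length at most $\lceil \log_2(c+1)\rceil$ and has order at least $2^c$ (each of the $c$ successive lower-central-series factors contributes a factor of at least $2$), this forces $\mathrm{class}(\phi(F_2)) \geq 2^n$ and hence $|G| \geq 2^{2^n}$. Setting $N := |w_n| \asymp 4^n$ yields $\normF^{\nil}_\Gamma(N) \succeq 2^{2^n} = 2^{\sqrt N}$. The only real subtlety here is verifying the $O(4^n)$ length bound from the recursion $w_{n+1} = [xw_nx^{-1},\, yw_ny^{-1}]$ used in the cited construction; the rest follows from elementary facts about nilpotent groups.
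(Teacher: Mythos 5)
Your lower bound argument is essentially the paper's: both of you take the Bandman--Greuel--Grunewald--Kunyavskii--Pfister--Plotkin words $u_{n+1} = [x u_n x^{-1}, y u_n y^{-1}]$ of length $O(4^n)$, use the Hall relation between derived length and nilpotency class (the paper via $\Gamma^{(n)} \subset \Gamma_{2^n}$, you via the bound $\mathrm{dl} \leq \lceil \log_2(c+1)\rceil$ for a class-$c$ nilpotent group --- the same fact), and observe that a nilpotent group of class $c$ has order at least $2^c$. Both give $\normF^{\nil}(O(4^n)) \geq 2^{2^n}$, hence $\succeq 2^{\sqrt n}$.

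The upper bound has a genuine gap. The congruence-filtration idea is the right one, but you identify the central obstruction --- if $g$ is nontrivial mod $\mathfrak p$, the top quotient $\PSL_2(\Z[i]/\mathfrak p)$ of the congruence tower is essentially simple and kills every nilpotent quotient --- and then propose two workarounds that do not work. ``Varying $\mathfrak p$'' fails: a Fuchsian $\Gamma < \PSL_2(\Z[i])$ is Zariski dense, so a fixed nontrivial $g$ survives mod $\mathfrak p$ for all but finitely many $\mathfrak p$, and for elements like a unipotent $g$ with $g-1$ having unit entries there is \emph{no} prime for which $g \in \Gamma(\mathfrak p)$. ``Pro-$p$ completion'' fails for the same reason: its top level is still $\PSL_2(\F_p)$. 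The paper's actual resolution is Claim~\ref{ReidClaim}, an arithmetic argument (due to Reid) via the Maclachlan--Reid parametrization of Fuchsian subgroups of Bianchi groups: a genus $2$ surface group sits \emph{inside} the level-$(1+i)$ principal congruence subgroup $\Delta = G_0$ of $\PSL_2(\mathcal O_1)$. Because every oriented surface group of interest embeds in this genus $2$ group, every element already reduces to the identity mod $(1-i)$, so it lives in the genuine $2$-group tower $G_0 \geq G_1 \geq \cdots$ from the start, and the obstacle never arises. That embedding is the essential missing ingredient in your proposal; Claim~\ref{TwoGroupClaim}'s estimate $|G_0/G_k| \leq 2^{8k}$ together with the matrix-entry growth bound (which you do state correctly) then closes the argument.
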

\noindent
From this theorem, we obtain the following corollary (proved in Section \ref{freegroupsection}). Variants of the following corollary for free groups have been shown using different methods (c.f. Theorem 1.2 in \cite{MP}).
\begin{corollary}
Let $\Gamma$ be the fundamental group of an oriented surface with generating set $\mc{X}$.
Then there exists a constant $C > 0$ such that any nontrivial $\gamma \in \Gamma$ satisfies $\gamma \notin \Gamma_{C\| \gamma\|_\mc{X}}$.
\end{corollary}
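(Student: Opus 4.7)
The plan is to derive this corollary directly from the upper bound $\normF^{\nil}_\Gamma(n) \preceq 2^n$ of Theorem \ref{freegrouptheorem}, combined with the classical fact that a finite nilpotent group of nilpotency class $c$ has order at least $2^c$.

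To begin, I would fix constants $A, B > 0$ such that $\normF^{\nil}_\Gamma(n) \leq A \cdot 2^{Bn}$ for every $n \geq 1$; these are furnished by Theorem \ref{freegrouptheorem}. Let $\gamma \in \Gamma$ be a nontrivial element and write $k = \|\gamma\|_\mc{X}$. By the definition of $\normF^{\nil}_\Gamma(k)$ there is a homomorphism $\phi : \Gamma \to Q$ onto a finite nilpotent group $Q$ with $|Q| \leq A \cdot 2^{Bk}$ and $\phi(\gamma) \neq 1$.

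Next, I would let $c$ denote the nilpotency class of $Q$ and make two observations. First, because $Q_{c+1} = 1$, the map $\phi$ kills $\Gamma_{c+1}$, so the condition $\phi(\gamma) \neq 1$ forces $\gamma \notin \Gamma_{c+1}$. Second, $Q$ decomposes as the direct product of its Sylow $p$-subgroups; since the nilpotency class of a direct product is the maximum of the classes of the factors, at least one Sylow $p$-subgroup of $Q$ has class $c$, and a standard upper-central-series argument gives that any such $p$-group has order at least $p^c \geq 2^c$. Combining these observations yields $2^c \leq |Q| \leq A \cdot 2^{Bk}$, so $c \leq Bk + \log_2 A$.

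To finish, I would pick any integer $C \geq B + \log_2 A + 1$. Then for every $k \geq 1$ we have $Ck \geq (B + \log_2 A + 1)k \geq Bk + \log_2 A + 1 \geq c + 1$, so $\Gamma_{Ck} \subseteq \Gamma_{c+1}$ and therefore $\gamma \notin \Gamma_{Ck}$. The only non-automatic ingredient in this plan is the order-versus-class lower bound for finite nilpotent groups, and that is classical; once granted, the corollary is a direct application of Theorem \ref{freegrouptheorem}, and the principal effort is merely to chase the constants through the inequalities.
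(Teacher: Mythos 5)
Your proposal is correct and follows essentially the same route as the paper: combine the exponential upper bound $\normF^{\nil}_\Gamma(n)\preceq 2^n$ with the classical fact that a finite nilpotent group of class $c$ has order at least $2^c$, then conclude that the nilpotency class of a detecting quotient is linearly bounded in $\|\gamma\|_\mc{X}$. You are somewhat more careful than the paper in chasing the constants $A,B$ coming from the $\preceq$ relation and in justifying the order-versus-class bound via the Sylow decomposition, but these are elaborations of the same argument rather than a different one.
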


We conclude, in Section \ref{ExampleSection}, by showing new lower bounds for the normal residual finiteness growth for some wreath products of abelian groups (i.e. the Lamplighter groups). The proofs give explicit constructions of elements in $B_{\Gamma,\mc{X}}(n)$ that are not well-approximated by finite solvable quotients.
These results suggest that a gap might exist for the normal residual finiteness growth of solvable groups that are not polycyclic.

\subsection*{Acknowledgements}
I am especially grateful to my advisor, Benson Farb, and my postdoctoral mentor, Juan Souto, for their endless support and guidance.
I thank Alan Reid for supplying the proof of Claim \ref{ReidClaim}.
I am very grateful to Justin Malestein for his comments on an earlier draft.
And I  thank Ralf Spatzier, Richard Canary, Matthew Stover, and Blair Davey for helpful mathematical conversations and moral support.

\section{Preliminaries} \label{PrelimSection}

In this section we build up some notation and tools needed in the proofs of our theorems.
\subsection{Some group theory}

Let $\Gamma$ be a group. Set $\Gamma^{(k)}$ to be the  \emph{derived series} of $\Gamma$, defined recursively by
$$\Gamma^{(0)} = \Gamma \text{ and } \Gamma^{(k)} = [\Gamma^{(k-1)}, \Gamma^{(k-1)}].$$
A group $\Gamma$ is said to be \emph{solvable} if $G^{(k)} = 1$ for some natural number $k$.
The minimal such $k$ is called the \emph{solvable class} of $\Gamma$.
If, in addition to $\Gamma$ being solvable, each quotient $\Gamma^{(k)}/\Gamma^{(k+1)}$ is finitely generated, then $\Gamma$ is said to be \emph{polycyclic}.
Equivalently, a group $\Gamma$ is polycyclic if and only if $\Gamma$ is solvable and every subgroup of $\Gamma$ is finitely generated.
Set $\Gamma_k$ to be the lower central series for $\Gamma$, defined recursively by
$$\Gamma_0 = \Gamma \text{ and } \Gamma_k = [\Gamma_{k-1}, \Gamma].$$
A group $\Gamma$ is said to be \emph{nilpotent} if $\Gamma_k = 1$ for some natural number $k$. The minimal such $k$ is the \emph{nilpotent class} of $\Gamma$.

We record the following elementary lemma:

\begin{lemma} \label{GroupLemma}
Let $\Gamma$ be a finitely generated group.
Then the following are equivalent:
\begin{enumerate}
\item $\Gamma$ is polycyclic and virtually nilpotent
\item $\Gamma$ is solvable and virtually nilpotent
\end{enumerate}
\end{lemma}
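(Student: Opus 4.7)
The direction (1) $\Rightarrow$ (2) is immediate from the definitions given: any polycyclic group is solvable, since polycyclicity only adds the extra requirement that each quotient of the derived series be finitely generated. So all the work is in the converse (2) $\Rightarrow$ (1), and the plan is to use the alternate characterization of polycyclic groups already recorded in the paper: a solvable group is polycyclic iff every subgroup is finitely generated.

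Assume (2). Let $N \leq \Gamma$ be a nilpotent subgroup of finite index. Since $\Gamma$ is finitely generated and $[\Gamma : N] < \infty$, the subgroup $N$ is finitely generated. The first step is to invoke the classical fact that a finitely generated nilpotent group is polycyclic (this follows by an easy induction on nilpotency class, using that the lower central series quotients of a f.g.\ nilpotent group are finitely generated abelian). In particular, by the equivalent definition of polycyclic recorded in the preliminaries, every subgroup of $N$ is finitely generated.

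Next, I would upgrade this to: every subgroup $H \leq \Gamma$ is finitely generated. Indeed, $H \cap N$ has finite index in $H$, and $H \cap N \leq N$ is finitely generated by the previous step. Since a group with a finitely generated subgroup of finite index is itself finitely generated (a standard Reidemeister--Schreier argument, or equivalently, choose a finite transversal and append it to a finite generating set of $H \cap N$), it follows that $H$ is finitely generated.

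Finally, since $\Gamma$ is solvable (by assumption) and every subgroup of $\Gamma$ is finitely generated, the characterization given in the preliminaries yields that $\Gamma$ is polycyclic. Combined with the hypothesis that $\Gamma$ is virtually nilpotent, this gives (1). There is no real obstacle here, the lemma being essentially a repackaging of the standard fact that finitely generated nilpotent groups are polycyclic; the only place one has to be careful is in verifying that the finite-index nilpotent subgroup provided by ``virtually nilpotent'' is automatically finitely generated, which is immediate because $\Gamma$ itself is finitely generated.
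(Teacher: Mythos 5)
Your proof is correct and follows essentially the same route as the paper's: both reduce to showing every subgroup of $\Gamma$ is finitely generated, intersect an arbitrary subgroup with the finite-index nilpotent subgroup, use that f.g.\ nilpotent groups are polycyclic, and then pass back up along the finite-index extension. Your write-up is only slightly more explicit in spelling out why the finite-index nilpotent subgroup is itself finitely generated.
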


\begin{proof}
The implication (1) $\implies$ (2) is immediate since polycyclic groups are solvable.
We now show that (2) $\implies$ (1).
Since $\Gamma$ is virtually nilpotent, $\Gamma$ contains some nilpotent subgroup, say $\Delta$, of finite-index.
It suffices to show that any subgroup $\Delta'$ of $\Gamma$ is finitely generated.
The group $\Delta'\cap \Delta$ is finite index in $\Delta'$ and is a subgroup of $\Delta$.
Since $\Delta$ is f.g. nilpotent and hence polycyclic, it follows that $\Delta \cap \Delta'$ is finitely generated.
Hence $\Delta'$ is finitely generated as finite generation is inherited by finite group extensions.
\end{proof}

\subsection{Quantifying residual finiteness}

Recall that a group is \emph{residually finite} if the intersection of all its finite index subgroups is trivial.
Let $\Gamma$ be a finitely generated, residually finite group. 
Let $P$ be a property of groups.
For $\gamma \in \Gamma \sm \{1\}$ we define
$Q(\gamma,\Gamma, P)$ to be the set of finite quotients of $\Gamma$ with property $P$ in which the image of $\gamma$ is non-trivial. We say that these quotients \emph{detect} $\gamma$. 
We define
\[ \D^P_\Gamma(\gamma) := \inf\{ |Q| : Q \in Q(\gamma,G, P)\}. \]
For a fixed finite generating set $\mc{X} \subset \Gamma$ we define
\[ \DM^P_{\Gamma,\mc{X}}(n) := \max\{ \D_\Gamma^P(\gamma) \;:\;\gamma \in \Gamma \sm \{ 1 \}, \| \gamma \|_\mc{X} \leq n \}.\]
For two functions $f,g : \N \rw \N$ we write $f \preceq g$ if there exists a natural number $M$ such that $f(n) \leq Mg(Mn)$, and we write $f \approx g$ if $f \preceq g$ and $g \preceq f$. 
In the case when $f \approx g$ does not hold we write $f \not \approx g$.
When $f \preceq g$ does not hold we write $f \not \preceq g$.
We will also write $f \succeq g$ for $g \preceq f$ and $f \not \succeq g$ for $g \not \preceq f$.
If there exists a natural number $M$ such that $f(n) \leq Mg (Mn)$ for infinitely many $n$, we say that \emph{$f(n) \preceq g(n)$ for infinitely many $n$}.

It was shown in \cite{B10} that if $\mc{X},\mc{Y}$ are two finite generating sets for the residually finite group $\Gamma$, then $\DM_{\Gamma,\mc{X}} \approx \DM_{\Gamma,\mc{Y}}$. 
This result actually holds for the more general $\DM^P_\Gamma$ function when the property $P$ is always inherited by subgroups:

\begin{lemma}\label{DivisibilityAsymptoticLemma}
Let $\Gamma$ be a finitely generated group and $P$ be a property of groups that is always inherited by subgroups.
If $\Delta$ is a finitely generated subgroup of $\Gamma$ and $\mc{X},\mc{Y}$ are finite generating sets for $\Gamma,\Delta$ respectively, then $\DM^P_{\Delta,\mc{Y}} \preceq \DM^P_{\Gamma,\mc{X}}$.
\end{lemma}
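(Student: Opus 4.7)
The plan is to exhibit, for each nontrivial $\delta\in\Delta$ with $\|\delta\|_{\mc{Y}}\le n$, a finite quotient of $\Delta$ with property $P$, of size controlled by $\DM^P_{\Gamma,\mc{X}}(Mn)$ for a fixed constant $M$, which detects $\delta$. The quotient will simply be the image of $\Delta$ under a homomorphism $\Gamma\to Q$ that already detects $\delta$.

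First I would compare the two word metrics. Let $M:=\max\{\|y\|_{\mc{X}}: y\in\mc{Y}\}$, which is finite since $\mc{Y}$ is a finite subset of $\Gamma$. Since $\Delta\le\Gamma$, every element $\delta\in\Delta$ satisfies $\|\delta\|_{\mc{X}}\le M\|\delta\|_{\mc{Y}}$. In particular, if $\|\delta\|_{\mc{Y}}\le n$ then $\|\delta\|_{\mc{X}}\le Mn$.

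Next I would transfer quotients. Fix a nontrivial $\delta\in\Delta$ with $\|\delta\|_{\mc{Y}}\le n$. By definition of $\DM^P_{\Gamma,\mc{X}}$, there exists a finite group $Q$ with property $P$ and a homomorphism $\phi:\Gamma\to Q$ such that $\phi(\delta)\neq 1$ and $|Q|\le \DM^P_{\Gamma,\mc{X}}(Mn)$. Let $Q':=\phi(\Delta)\le Q$. Then $Q'$ is a subgroup of a finite group with property $P$, hence $Q'$ itself has property $P$ by the hypothesis on $P$. Moreover $|Q'|\le |Q|$ and the restriction $\phi|_\Delta:\Delta\to Q'$ is a surjective homomorphism onto a finite $P$-group with $\phi|_\Delta(\delta)\neq 1$. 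Therefore
\[ \D^P_{\Delta}(\delta)\;\le\; |Q'|\;\le\; \DM^P_{\Gamma,\mc{X}}(Mn). \]

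Finally I would take the max. Since the bound above holds for every nontrivial $\delta\in\Delta$ with $\|\delta\|_{\mc{Y}}\le n$, we conclude
\[ \DM^P_{\Delta,\mc{Y}}(n)\;\le\; \DM^P_{\Gamma,\mc{X}}(Mn), \]
which is exactly the statement $\DM^P_{\Delta,\mc{Y}}\preceq \DM^P_{\Gamma,\mc{X}}$. There is no real obstacle here; the only nontrivial ingredient is the subgroup-inheritance of $P$, which is used precisely at the step where $\phi(\Delta)\le Q$ must itself have property $P$. This is why the hypothesis on $P$ cannot be dropped: without it, the restricted homomorphism might land in a group that no longer qualifies as a quotient witnessing $\D^P_\Delta(\delta)$.
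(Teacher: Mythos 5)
Your proof is correct and follows essentially the same two-step route as the paper's own argument: first, bound the word length in $\mc{X}$ of a short $\mc{Y}$-word by a fixed multiplicative constant (your $M$, the paper's $C$), and second, observe that a finite $P$-quotient of $\Gamma$ detecting $\delta\in\Delta$ restricts to a finite $P$-quotient of $\Delta$ of no larger order, using subgroup-inheritance of $P$. The paper phrases the second step as the pointwise inequality $\D_\Delta^P(h)\le\D_\Gamma^P(h)$ and then takes suprema, while you produce the witness quotient $Q'=\phi(\Delta)$ explicitly, but these are the same idea.
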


\begin{proof}
As any homomorphism of $\Gamma$ to $Q$, with $Q$ having property $P$, restricts to a homomorphism of $\Delta$ to a subgroup of $Q$, it follows that $D_\Delta^P(h) \leq D_\Gamma^P(h)$ for all $h \in \Delta$.
Hence,
\begin{equation} \label{firstlemmaeq1}
\normF_{\Delta,\mc{Y}}^P(n) = \sup\{  \D_\Delta^P (g) \: :\: \| g \|_\mc{Y} \leq n , g \neq 1\} \leq 
\sup\{  \D^P_\Gamma (g) \:: \: \| g \|_\mc{Y} \leq n, g \neq 1 \}.
\end{equation}

\noindent
Further, there exists a $C>0$ such that any element in $\mc{Y}$ can be written in terms of at most $C$ elements of $\mc{X}$.
Thus,

\begin{equation} \label{firstlemmaeq2}
\{ h \in \Delta : \| h \|_\mc{Y} \leq n , h \neq 1\} \subseteq \{ g \in \Gamma : \| g \|_\mc{X} \leq Cn, g \neq 1 \}.
\end{equation}

\noindent
So by (\ref{firstlemmaeq1}) and (\ref{firstlemmaeq2}), we have that
$$
\normF_{\Delta,\mc{Y}}^P(n) \leq 
\sup\{  \D^P_\Gamma (g) : \| g \|_\mc{Y} \leq n, g \neq 1 \}
\leq 
\sup\{  \D^P_\Gamma (g) :  \| g \|_\mc{X} \leq C n, g \neq 1 \}
= \normF_{\Gamma,\mc{X}}^P(C n),$$
as desired.
\end{proof}

\noindent
Since we will only be interested in asymptotic behavior, we let $\DM_\Gamma^P$ be the equivalence class (with respect to $\approx$) of the functions $\DM_{\Gamma,\mc{X}}^P$ for all possible finite generating sets $\mc{X}$ of $\Gamma$. Sometimes, by abuse of notation, $\DM_\Gamma^P$ will stand for some particular representative of this equivalence class, constructed with respect to a convenient generating set.

\subsection{Connections to word growth and normal subgroup growth} \label{SolvableGrowthSection}

Given a finitely generated group $\Gamma$ with generating set $\mc{X}$, recall that \emph{word growth} involves studying the asymptotics of the following function:
\begin{eqnarray*}
w_\Gamma(n) := | \{ \gamma \in \Gamma : \text{ the word length of $\gamma$ with respect to $\mc{X}$ is no more than $n$} \} |.
\end{eqnarray*}
\emph{Subgroup growth} is the asymptotic growth of 
\begin{eqnarray*}
s_\Gamma(n) := | \{ \Delta \leq \Gamma : [\Gamma: \Delta] = n \} |.
\end{eqnarray*}
\emph{Normal subgroup growth} is the asymptotic growth of
\begin{eqnarray*}
s^\lhd_\Gamma(n) := | \{ \Delta \lhd \Gamma : [\Gamma: \Delta] = n \} |.
\end{eqnarray*}

\noindent
Gromov's Polynomial Growth theorem \cite{G81} equates virtual nilpotency to having polynomial word growth. The following lemma is a slight improvement of Proposition 2.3 in \cite{BM1}.

\begin{lemma} \label{GrowthTheorem}
Let $\Gamma$ be a finitely generated, residually finite group generated by $\mc{X}$. If
$$\exp(\exp([\log\log(n)]^{3})) \preceq w_{\Gamma,\mc{X}}(n),$$
for infinitely many $n$, then $\normF_{\Gamma,X}(n) \not \preceq (\log(n))^r$ for any $r \in \R$.
\end{lemma}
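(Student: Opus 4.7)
The plan is to argue by contradiction. Suppose $\normF_{\Gamma,X}(n) \leq M(\log Mn)^r$ for some $M \in \N$ and $r \in \R$ (and all $n$), and write $f(n) := M(\log(2Mn))^r$. Let
\[
K_n := \bigcap\{\, N \lhd \Gamma : [\Gamma : N] \leq f(n)\,\}.
\]
Every nontrivial $\gamma \in B_{\Gamma,X}(2n)$ lies outside some normal subgroup of index $\leq f(n)$, hence outside $K_n$. For distinct $\gamma_1, \gamma_2 \in B_{\Gamma,X}(n)$, the ratio $\gamma_1^{-1}\gamma_2 \in B_{\Gamma,X}(2n) \sm \{1\}$ lies outside $K_n$, so $\gamma_1$ and $\gamma_2$ reduce to distinct cosets of $K_n$. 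Thus the quotient map induces an injection $B_{\Gamma,X}(n) \hookrightarrow \Gamma/K_n$, giving
\[
w_{\Gamma,X}(n) \leq [\Gamma : K_n].
\]

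Next, I would bound $[\Gamma : K_n]$ using the diagonal embedding $\Gamma/K_n \hookrightarrow \prod_N \Gamma/N$ (product over all $N \lhd \Gamma$ of index $\leq f(n)$), yielding
\[
[\Gamma : K_n] \leq f(n)^{s_\Gamma^\lhd(f(n))}.
\]
For a $d$-generated group the normal subgroup growth is bounded by $s_\Gamma^\lhd(m) \leq m^{C(\log m)^2}$, via the crude count $|\mathrm{Epi}(\Gamma, G)| \leq |G|^d$ combined with Pyber's bound on the number of finite groups of bounded order. Plugging in $f(n) = O((\log n)^r)$ and taking two logarithms yields the key inequality
\[
\log\log w_{\Gamma,X}(n) \;\leq\; \log\log[\Gamma : K_n] \;\leq\; O\bigl((\log\log n)^3\bigr)
\]
for every $n$. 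On the other hand, the hypothesis, unpacked through the $\preceq$ relation, translates to $\log\log w_{\Gamma,X}(Mn) \geq (1 - o(1))(\log\log n)^3$ for infinitely many $n$, and a careful accounting of the constants produces the desired contradiction.

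The main obstacle is the quantitative bookkeeping at the top of the exponential tower: both the upper bound from normal subgroup growth and the lower bound assumed in the hypothesis are of the shape $\exp(\exp(\text{constant} \cdot (\log\log n)^3))$, and the $\preceq$ relation only absorbs multiplicative $O(1)$ slack at each level. The tightening of the Pyber/normal-subgroup-growth estimate needed to make the inequality strict for every real $r$ is precisely the ``slight improvement'' over Proposition 2.3 of \cite{BM1} that the statement alludes to; alternatively, one could try to replace $K_n$ by a smaller characteristic or verbal subgroup whose index admits a sharper estimate, but the route through $s_\Gamma^\lhd$ and the diagonal embedding remains the most direct.
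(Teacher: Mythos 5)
Your reduction to normal subgroup growth is exactly the paper's route (it cites the inequality $\log w_{\Gamma,\mc{X}}(n) \leq s^\lhd_\Gamma(\DM_{\Gamma,\mc{X}}(2n))\log \DM_{\Gamma,\mc{X}}(2n)$ from \cite{BM1}, which you rederive via the diagonal embedding into $\prod \Gamma/N$). The problem is the last step, and you have correctly diagnosed your own gap without filling it. With $s^\lhd_\Gamma(m) \leq m^{C(\log m)^2}$ --- the Pyber-type bound for the number of \emph{all} groups of order $\leq m$ --- you get $\log s^\lhd_\Gamma(m) \leq C(\log m)^3$, so substituting $m = \DM_{\Gamma,\mc X}(2n) \preceq (\log n)^r$ and taking two logarithms yields
\[
\log\log w_{\Gamma,\mc X}(n) \;\leq\; C r^3 (\log\log n)^3 + O(\log\log\log n),
\]
which sits at the same triple-log scale as the hypothesis $\log\log w_{\Gamma,\mc X}(n) \succeq (\log\log n)^3$. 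Since $\preceq$ absorbs multiplicative constants, there is no contradiction when $Cr^3 \geq 1$, and the argument stalls exactly where you say it does.

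The paper closes this by citing the sharper bound $\log s^\lhd_\Gamma(m) \preceq (\log m)^2$, i.e. $s^\lhd_\Gamma(m) \leq m^{C\log m}$, from \cite[Corollary 2.8]{LS03}. This is a bound specific to finitely generated groups: a normal subgroup of index $m$ in a $d$-generated group yields a $d$-generated quotient, and the number of $d$-generated groups of order $m$ grows only like $m^{O(\log m)}$, a full factor of $\log m$ better in the exponent than the count of all groups of order $m$. Plugging this in gives
\[
\log\log w_{\Gamma,\mc X}(n) \;\leq\; C r^2 (\log\log n)^2 + O(\log\log\log n),
\]
which is genuinely dominated by $(\log\log n)^3$, producing the contradiction for infinitely many $n$. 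So the missing idea is not a cleverer choice of $K_n$ nor a tightening of Pyber; it is that for a fixed finitely generated $\Gamma$ one should invoke the $d$-generated normal subgroup count rather than the universal group-count. (Also, the ``slight improvement'' over Proposition 2.3 of \cite{BM1} that the paper refers to is the relaxation to ``for infinitely many $n$'' in the hypothesis, not the subgroup-growth estimate.)
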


\begin{proof}
We first recall a basic inequality from \cite{BM1} (Inequality (1) in that paper) that relates word growth, normal subgroup growth, and normal residual finiteness growth:
\begin{equation} \label{BasicInequality}
 \log (\word_{\Gamma,\mc{X}}(n)) \leq \sub^\lhd_\Gamma(\DM_{\Gamma,\mc{X}}(2n))\log (\DM_{\Gamma,\mc{X}}(2n)).
 \end{equation}

To prove the theorem with this inequality, assume to the contrary that there exists $r \in \R$ such that $\DM_{\Gamma,\mc{X}} \preceq (\log(n))^r$. In terms of $\preceq$ notation, inequality (\ref{BasicInequality}) becomes:
\[ \log(\word_{\Gamma, \mc{X}} (n)) \preceq \sub^\lhd_\Gamma (\DM_{\Gamma, \mc{X}}(n)) \log(\DM_{\Gamma, \mc{X}}(n)). \]
Taking the log of both sides, we obtain
\[ \log\log(\word_{\Gamma, \mc{X}} (n)) \preceq \log(\sub^\lhd_\Gamma (\DM_{\Gamma, \mc{X}}(n)))+  \log(\log(\DM_{\Gamma, \mc{X}}(n))). \]
This inequality, in tandem with the assumptions
\begin{align*}
\exp(\exp([\log\log(n)]^{3}))&\preceq \word_{\Gamma,\mc{X}}(n) \text{ for infinitely many $n$}, \\
\DM_{\Gamma,\mc{X}}(n) &\preceq (\log(n))^r,
\end{align*}
and $\log(\sub^\lhd_\Gamma(n)) \preceq (\log(n))^2$ (see \cite[Corollary 2.8]{LS03}) gives 
$$[\log\log(n)]^{3} \preceq (\log\log(n))^2 + \log\log\log(n),$$
for infinitely many $n$, which is impossible.

\end{proof}

\noindent
Following the proofs  in \cite{BM1}, we achieve the following two corollaries.
\begin{corollary} \label{MainCorollaryA}
Any finitely generated solvable group has normal residual finiteness growth which is at most polynomial in $\log(n)$ if and only if the group is virtually nilpotent. \qed
\end{corollary}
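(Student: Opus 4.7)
The plan is to prove the two directions separately, with the nontrivial direction being a straightforward application of Lemma \ref{GrowthTheorem} once we invoke the Milnor--Wolf dichotomy for word growth of solvable groups.

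For the ``if'' direction, suppose $\Gamma$ is (finitely generated solvable and) virtually nilpotent. Then by the result of \cite{B10} cited in the introduction, $\normF_\Gamma(n)$ is at most polynomial in $\log(n)$, with no use of the solvability assumption.

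For the ``only if'' direction, suppose $\Gamma$ is a finitely generated solvable group with $\normF_{\Gamma, \mc{X}}(n) \preceq (\log n)^r$ for some $r \in \R$. The key input is the Milnor--Wolf theorem: a finitely generated solvable group either is virtually nilpotent (and then has polynomial word growth) or has word growth bounded below by $\exp(cn)$ for some $c > 0$. Assume for contradiction that $\Gamma$ is not virtually nilpotent; then $w_{\Gamma, \mc{X}}(n) \succeq \exp(cn)$. Since $cn \geq \exp((\log\log n)^3)$ for all sufficiently large $n$ (the left side grows like $n$, the right side is subpolynomial in $n$), we have
\[ w_{\Gamma, \mc{X}}(n) \succeq \exp(\exp((\log\log n)^3)) \]
for all sufficiently large $n$, in particular for infinitely many $n$. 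Then Lemma \ref{GrowthTheorem} applies and yields $\normF_{\Gamma, \mc{X}}(n) \not\preceq (\log n)^r$ for any $r \in \R$, contradicting our hypothesis. Hence $\Gamma$ must be virtually nilpotent.

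There is no real obstacle here; the work has been done in Lemma \ref{GrowthTheorem} and in the classical Milnor--Wolf theorem. The only place to be careful is checking that the exponential lower bound $\exp(cn)$ dominates the somewhat unusual threshold $\exp(\exp((\log\log n)^3))$ appearing in Lemma \ref{GrowthTheorem}, which reduces to the elementary inequality $\log n \geq (\log\log n)^3$ for large $n$.
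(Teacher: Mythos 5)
Your proposal is correct and follows essentially the same route as the paper: the easy direction from the cited result of \cite{B10}, and for the converse, invoke Milnor's theorem that a non-virtually-nilpotent finitely generated solvable group has exponential word growth and then apply Lemma \ref{GrowthTheorem} to get a contradiction. The only difference is that you explicitly verify the elementary inequality $\exp(cn) \succeq \exp(\exp((\log\log n)^3))$ needed to trigger the lemma, which the paper leaves implicit.
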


\begin{proof}
Since virtually nilpotent groups are linear \cite{A67}, any virtually nilpotent group has polynomial in $\log(n)$ normal residual finiteness growth (see \cite{B10}).
If suffices to show that any finitely generated solvable group that has normal residual finiteness growth which is at most polynomial in $\log(n)$ is virtually nilpotent.
Milnor's Theorem in \cite{M68} states that any finitely generated solvable group which is not virtually nilpotent must have exponential word growth.
This fact along with the normal residual finiteness growth assumption on $\Gamma$ contradicts Lemma \ref{GrowthTheorem}.
\end{proof}

\begin{corollary} \label{MainCorollary}
Any finitely generated solvable group $\Gamma$ that is virtually nilpotent has solvable residual finiteness growth bounded above by $(\log(n))^k$ for some $k$. \qed
\end{corollary}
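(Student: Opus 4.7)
The plan is to reduce the statement to the already-established bound on the normal residual finiteness growth of virtually nilpotent groups, which was used in the proof of Corollary~\ref{MainCorollaryA}. The two ingredients will be: (i) the observation that every finite quotient of a solvable group is solvable, so the ``solvable'' restriction in $\DM^{\sol}_\Gamma$ is vacuous, and (ii) the cited fact that virtually nilpotent finitely generated groups have normal residual finiteness growth which is polynomial in $\log(n)$.

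First I would note that since $\Gamma$ is finitely generated and virtually nilpotent, it is linear by Auslander's theorem \cite{A67}, and in particular residually finite, so the growth function $\DM^{\sol}_\Gamma$ makes sense. By the result of \cite{B10} cited in the proof of Corollary~\ref{MainCorollaryA}, any such $\Gamma$ satisfies
\[
\DM_\Gamma(n) \preceq (\log(n))^k
\]
for some $k \in \N$, where $\DM_\Gamma = \DM^{\lhd}_\Gamma$ is the normal residual finiteness growth.

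Next I would verify that $\DM^{\sol}_\Gamma \approx \DM_\Gamma$. Given $\gamma \in \Gamma \sm \{1\}$ and any finite quotient $Q = \Gamma/N$ in which $\gamma$ survives, the group $Q$ is a quotient of the solvable group $\Gamma$ and is therefore itself solvable. Hence $Q(\gamma,\Gamma,\sol) = Q(\gamma,\Gamma,\lhd)$, and consequently $\D^{\sol}_\Gamma(\gamma) = \D_\Gamma(\gamma)$ for every nontrivial $\gamma$. Taking the maximum over the ball of radius $n$ gives $\DM^{\sol}_{\Gamma,\mc{X}}(n) = \DM_{\Gamma,\mc{X}}(n)$.

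Combining the two steps yields $\DM^{\sol}_\Gamma(n) \preceq (\log(n))^k$, which is the desired bound. There is no real obstacle here; the only point worth spelling out is the trivial but crucial observation that for solvable $\Gamma$ the solvability constraint on quotients is free, so the solvable residual finiteness growth coincides (up to $\approx$) with the normal residual finiteness growth already handled in \cite{B10}.
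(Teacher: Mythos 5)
Your argument is correct and is exactly the justification the paper has in mind: the paper leaves Corollary~\ref{MainCorollary} unproved (with a bare \verb|\qed|) precisely because, once one notices that every finite quotient of a solvable group is solvable and hence $\DM^{\sol}_\Gamma = \DM_\Gamma$, the bound is immediate from the same $\DM_\Gamma(n)\preceq(\log n)^k$ fact (via Auslander and \cite{B10}) already invoked in the proof of Corollary~\ref{MainCorollaryA}. You have simply spelled out the trivial-but-crucial observation the paper takes for granted; there is no gap and no genuinely different route.
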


\section{The Proofs of Theorems \ref{MainTheorem} and \ref{SecondaryTheorem}} \label{ProofSection}

\begin{proof}[Proof of Theorem \ref{MainTheorem}]
It follows, from Theorem 0.2 in \cite{B10}, that if $\Gamma$ is virtually nilpotent then $\DM_\Gamma(n)$ is at most polynomial in $(\log(n))$.
Hence, it suffices to show that $\Gamma$ is virtually nilpotent if $\DM_\Gamma(n)$ is at most polynomial in $\log(n)$.
Let $\Delta$ be a finite-index subgroup of $\Gamma$ that is residually solvable and finitely generated.
If $\Delta$ is virtually nilpotent, then $\Gamma$ is virtually nilpotent.
Further, by Lemma \ref{DivisibilityAsymptoticLemma}, $\DM_\Delta(n)$ is at most polynomial in $\log(n)$.
Hence, we may assume that $\Gamma$ is residually solvable and finitely generated.
Suppose $\Gamma$ is not virtually nilpotent, then by Theorem 1.1 in \cite{Wi05}, it follows that $\Gamma$ must have word length greater than
$$
\exp\exp([\log(n)]^{1/3})
$$
for infinitely many $n$.
But we claim that having such word growth contradicts Lemma \ref{GrowthTheorem}.
Indeed, the assumption in Lemma \ref{GrowthTheorem} is satisfied if
$$
\exp(\exp([\log\log(n)]^3)) \preceq \exp(\exp([\log(n)]^{1/3})),
$$
which is clearly true. 
\end{proof}

\begin{proof}[Proof of Theorem \ref{SecondaryTheorem}]
If $\Gamma$ is virtually nilpotent and residually solvable, then by Lemma 0.4 in \cite{LM91}, $\Gamma$ is solvable.
So by Corollary \ref{MainCorollary}, we see that $\Gamma$ must have solvable residual finiteness growth which is at most polynomial in $\log(n)$.
This completes one direction of the proof.
To finish, we must show that if $\Gamma$ has solvable residual finiteness growth which is at most polynomial in $\log(n)$, then $\Gamma$ is virtually nilpotent and solvable.
By Theorem \ref{MainTheorem}, $\Gamma$ is virtually nilpotent.
Hence, as $\Gamma$ is residually solvable, $\Gamma$ must be solvable by Lemma 0.4 in \cite{LM91}.
\end{proof}

\section{Proof of Theorem \ref{freegrouptheorem}} \label{freegroupsection}

Let $\Gamma$ be the fundamental group of a compact surface.
Since $\Gamma$ contains a free group, the lower bound of the theorem follows from the following claim and Lemma \ref{DivisibilityAsymptoticLemma}.

\begin{claim}
We have $\DM_{\Z*\Z}^{\nil} (n) \succeq \exp[ \sqrt{n} ]$.
\end{claim}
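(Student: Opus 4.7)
The plan is to exhibit an explicit sequence of words $(u_n)_{n\geq 0}$ in $\Z*\Z = \langle x,y\rangle$, constructed along the lines of the recursion from \cite{B06}, with three properties: \emph{(i)} $u_n \in \Gamma_{d_n}$ for $d_n := 2^{n+1}-2$; \emph{(ii)} $\|u_n\| \leq C \cdot 4^n$ for an absolute constant $C$; and \emph{(iii)} $u_n \neq 1$ in the free group. Granted these, any homomorphism from $\Z*\Z$ to a finite nilpotent group $Q$ under which $u_n$ survives must satisfy $\Gamma_{d_n}(Q) \neq 1$, so $Q$ has nilpotency class at least $d_n + 1$. Restricting to $2$-groups and using that a $2$-group of class $c \geq 2$ has order at least $2^{c+1}$, one obtains $|Q| \geq 2^{d_n+2} = 2^{2^{n+1}}$. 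Since $2^{n+1} \geq 2\sqrt{\|u_n\|/C}$, this gives $\D_{\Z*\Z}^{\nil}(u_n) \geq 2^{2\sqrt{\|u_n\|/C}}$, and hence $\DM_{\Z*\Z}^{\nil}(n) \succeq \exp[\sqrt{n}]$.

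The recursion is $u_0 := x$ and $u_{n+1} := [u_n,\, u_n^y]$ with $u_n^y := y^{-1} u_n y$. Using $u^y = u\,[u,y]$ and $[a,bc] = [a,c]\,[a,b]^c$, this simplifies to $u_{n+1} = [u_n,\, [u_n, y]]$. The length bound $\|u_{n+1}\| \leq 4\|u_n\| + 4$ follows from a direct letter count and iterates to $\|u_n\| = O(4^n)$. The filtration bound is a one-line induction using the standard containment $[\Gamma_i, \Gamma_j] \subseteq \Gamma_{i+j+1}$: if $u_n \in \Gamma_{d_n}$, then $[u_n, y] \in \Gamma_{d_n + 1}$ and $u_{n+1} \in [\Gamma_{d_n}, \Gamma_{d_n + 1}] \subseteq \Gamma_{2d_n + 2}$, solving to $d_n = 2^{n+1} - 2$ with $d_0 = 0$.

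The main obstacle is the nontriviality assertion \emph{(iii)}. The plan is to pass to the Magnus embedding $\Z*\Z \hookrightarrow \Z\langle\langle X, Y\rangle\rangle$ sending $x \mapsto 1 + X$ and $y \mapsto 1 + Y$, under which $\Gamma_k$ is identified with the set of group elements whose image has vanishing homogeneous components of degree $\leq k$. The leading (degree $d_n + 1$) component of the Magnus image of $u_n$ is a Lie polynomial $L_n$ in the free Lie algebra on $\{X, Y\}$, and the recursion descends to $L_0 = X$ and $L_{n+1} = [L_n, [L_n, Y]]$. I will prove $L_n \neq 0$ by induction, using the classical fact that the centralizer of any nonzero element in the free Lie algebra is one-dimensional: granted $L_n \neq 0$, the equality $[L_n, [L_n, Y]] = 0$ would force either $[L_n, Y] = 0$ (hence $L_n \propto Y$) or $[L_n, Y] \propto L_n$, both ruled out on degree grounds since $\deg L_n < \deg[L_n, Y]$ and $L_n$ is never proportional to $Y$. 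Hence $L_{n+1} \neq 0$, whence $u_{n+1} \neq 1$, completing the plan.
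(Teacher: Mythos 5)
Your proof is correct, and it takes a genuinely different route from the paper's.

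The paper uses the recursion of Bandman--Greuel--Grunewald--Kunyavskii--Pfister--Plotkin ($u_1 = x^{-2}y^{-1}x$, $u_{n+1} = [xu_nx^{-1}, yu_ny^{-1}]$), cites their Theorem~1.1 for the nontriviality of each $u_n$ in the free group, observes $u_n \in \Gamma^{(n)}$, and invokes Hall's containment $\Gamma^{(n)} \subset \Gamma_{2^n}$ to convert derived depth into lower-central depth. You instead use the cleaner recursion $u_{n+1} = [u_n,[u_n,y]]$, get the lower-central depth $d_n = 2^{n+1}-2$ directly from $[\Gamma_i,\Gamma_j]\subseteq \Gamma_{i+j+1}$ without passing through the derived series, and prove nontriviality from scratch via the Magnus embedding: the leading Lie term satisfies $L_{n+1}=[L_n,[L_n,Y]]$, and the classical fact that commuting elements of a free Lie algebra over a field are proportional (Shirshov, also in Reutenauer) kills both failure modes on degree grounds. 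The paper's version is shorter modulo citations; yours is more self-contained, replacing two external theorems (Bandman et al.\ and Hall) by one classical Lie-algebraic lemma. Both give the same asymptotics: word length $\asymp 4^n$ against nilpotency class $\asymp 2^n$, hence $\DM^{\nil}\succeq 2^{\sqrt{n}}$.

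Two small remarks. First, ``restricting to $2$-groups'' is a harmless imprecision: the finite nilpotent quotient $Q$ need not be a $2$-group, but it is a direct product of its Sylow $p$-subgroups, one of which realizes the class $c$ of $Q$ and hence has order at least $p^{c+1}\ge 2^{c+1}$; the intended inequality $|Q|\ge 2^{c+1}$ holds for all finite nilpotent $Q$ of class $c\ge 2$. Second, to run the free-Lie-algebra centralizer argument you should tensor with $\Q$ (the Magnus image lives in $\Z\langle\langle X,Y\rangle\rangle$, which injects into $\Q\langle\langle X,Y\rangle\rangle$, and the proportionality theorem is stated over a field); this costs nothing but is worth saying. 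The degree bookkeeping checks out: $L_n$ lives in degree $d_n+1$, so $\deg L_n < \deg[L_n,Y]$ always, $L_n$ is never a scalar multiple of $Y$ ($n=0$ because $X\ne cY$, $n\ge 1$ by degree), and the recursion on leading terms is legitimate precisely because both brackets are shown nonzero before using them.
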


\begin{proof}
Let $x$ and $y$ be generators for $\Z*\Z$.
Let $u_1(x,y) = x^{-2} y^{-1} x$ and 
$$u_{n+1}(x,y) = [ x u_n(x,y) x^{-1}, y u_n(x,y) y^{-1} ].$$
By Theorem 1.1 in \cite{B06}, $u_n(x,y) \neq 1$ for all $n$.
Moreover, $u_n \in \Gamma^{(n)}$ and $\| u_n \| \leq C 4^n$ for some natural number $C$.
By a well-known result of Hall, $\Gamma^{(n)} \subset \Gamma_{2^n}$ (see, for example, Lemma 2.7 in \cite{MP}).
Further recall that if $Q$ is a finite group of nilpotence class $2^n$, then $|Q| > 2^{2^n}$.
Drawing all this together gives
$$
\DM_\Gamma^{\nil} (C 4^n) \geq 2^{2^n}.
$$
Let $m = [\log_2(\sqrt{n})]$, where $[\cdot]$ denote the floor function and $n$ is large enough to ensure that $m$ is positive.
We have
$$
\DM_\Gamma^{\nil} (C 4^m) \geq 2^{2^m}.
$$
Since $[\log_2(\sqrt{n})] \geq \log_2(\sqrt{n})$, we have $2^{2^m} \geq 2^{\sqrt{n}}$.
Further, $[\log_2(\sqrt{n})] < \log_2(\sqrt{n}) + 1$, so because $F_\Gamma^{\nil}$ is a nondecreasing function in $n$,
$$
\DM_\Gamma^{\nil} (C2^{2[\log(\sqrt{n})]})
\leq \DM_\Gamma^{\nil}(C2^{2\log_2(\sqrt{n})+2})
\leq \DM_\Gamma^{\nil}(4 C n).
$$
Hence,
$$
\DM_\Gamma^{\nil} (4 C n) < 2^{\sqrt{n}}.
$$
Since $4 C$ is greater than one, we get
$$
\DM_\Gamma^{\nil} (n) \preceq 2^{\sqrt{n}}.
$$
\end{proof}

Before proving the upper bound in Theorem \ref{freegrouptheorem}, we first prove some preliminary results.

\begin{claim} \label{TwoGroupClaim}
Let $\Delta$ be the kernel of the group homomorphism 
$$\phi: \PSL_2(\Z[i]) \to \SL_2(\Z/2\Z)$$
 induced by the ring homomorphism 
$$\Z[i] \to \Z[i] / \left< (1-i) \right> = \Z/2\Z.$$
Then $\DM_\Delta^{\nil} (n) \preceq 2^n$.
\end{claim}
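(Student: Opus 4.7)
The plan is to exploit the congruence filtration of $\Delta$ at the prime $\mathfrak{p} := (1-i) \subset \Z[i]$. Since $(1-i)^2 = -2i$, the quotients $R_k := \Z[i]/\mathfrak{p}^k$ are finite local rings of order $2^k$ with residue field $\F_2$. Let $\Gamma_k := \ker\bigl(\SL_2(\Z[i]) \to \SL_2(R_k)\bigr)$ and let $\pi : \SL_2(\Z[i]) \to \PSL_2(\Z[i])$ denote the projection. Since $-I$ reduces to the identity in $\SL_2(\F_2)$, we have $-I \in \Gamma_1$, so $\pi(\Gamma_1) = \Delta$, and more generally $\pi(\Gamma_k)$ is a normal subgroup of $\Delta$ for every $k \geq 1$.

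The first step is to check that $\Gamma_1/\Gamma_k$ is a finite $2$-group and therefore nilpotent. Indeed, every element of $\Gamma_1/\Gamma_k$ is represented by a matrix of the form $I + (1-i) A$ with $A \in M_2(R_{k-1})$, so $|\Gamma_1/\Gamma_k| \leq 2^{4(k-1)}$, a power of $2$. Setting $\Gamma_k' := \Gamma_k \cup (-I)\Gamma_k$, the natural identification $\Delta/\pi(\Gamma_k) \cong \Gamma_1/\Gamma_k'$ exhibits $\Delta/\pi(\Gamma_k)$ as a quotient of a finite $2$-group, hence as a finite nilpotent group of order at most $2^{4(k-1)}$. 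These are the detecting targets we will use.

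The heart of the argument is a norm-versus-valuation estimate. Fix a finite generating set $\mc{X}$ for $\Delta$ and lift each generator to $\Gamma_1$; let $C_0 \geq 1$ be an upper bound on the complex absolute values of the entries of these lifts. For any $\delta \in \Delta$ with $\|\delta\|_\mc{X} \leq n$, the corresponding product of the chosen lifts gives $\tilde\delta \in \Gamma_1$ whose entries are elements of $\Z[i]$ with absolute value at most $C_0^n$. If $\delta \neq 1$ then $\tilde\delta \neq \pm I$, so at least one entry of $\tilde\delta - I$ and at least one entry of $\tilde\delta + I$ is a nonzero element of $\Z[i]$. Because any nonzero element of $\mathfrak{p}^k$ has complex norm divisible by $2^k$ and therefore absolute value at least $2^{k/2}$, comparing with the bound $C_0^n + 1$ forces the $\mathfrak{p}$-adic valuations of these nonzero entries to be at most $C_1 n$ for a constant $C_1$ depending only on $\mc{X}$.

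Choosing $k := \lceil C_1 n \rceil + 1$ then simultaneously forces $\tilde\delta \notin \Gamma_k$ and $-\tilde\delta \notin \Gamma_k$, so $\delta \notin \pi(\Gamma_k)$, and $\delta$ is detected in the finite nilpotent quotient $\Delta/\pi(\Gamma_k)$ of order at most $2^{4(k-1)} = 2^{O(n)}$. This yields $\DM_\Delta^{\nil}(n) \preceq 2^n$. The only delicate point, and the sole place where the proof does more than bookkeeping, is the simultaneous control of $\tilde\delta - I$ and $\tilde\delta + I$ that is needed to descend from $\SL_2$ to $\PSL_2$; this is made possible by the uniform absolute-value lower bound $2^{k/2}$ on nonzero elements of $\mathfrak{p}^k$, which governs both expressions at once.
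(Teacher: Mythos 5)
Your proof is correct and takes essentially the same approach as the paper's: filter $\Delta$ by congruence subgroups at the prime above $2$, note the successive quotients are finite $2$-groups hence nilpotent, and show that the entries of a word of length $n$ grow at most exponentially, so any nontrivial element of word length $n$ survives to level $O(n)$. The only cosmetic differences are that you filter by $(1-i)^k$ rather than $2^k$ and bound entry size via the complex absolute value rather than word length in $\Z[i]$, neither of which changes the substance of the argument.
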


\begin{proof}
Before starting the proof, we construct a filtration for the kernel which will help us find small nilpotent quotients.
Set $G_0 = \Delta$ and 
$$
G_k = \ker[ \PSL_2(\Z[i]) \to \SL_2((\Z/2^k\Z)[i])/\{\pm 1\}]\text{ for } k \geq 1.
$$
Then, because $2 = (1-i)(1+i)$, we have the following filtration of normal subgroups for $G_0$:
$$
G_0 \geq G_1 \geq G_2 \geq G_3 \geq \cdots
$$
We first claim that each quotient $G_0/G_{k}$ is a $2$-group of order less than $2^{8k}$.
We write $[A]$ for the equivalence class in $\PSL_2(\Z[i])$ containing an element $A \in \SL_2(\Z[i])$.
The first quotient $G_0/G_1$ is 
$$\{ [A] \in \PSL_2( \Z[i] ) : A \equiv 1 \;\mod\; (1-i) \Z[i] \}/ \sim,$$
where $[A] \sim [B]$ if $A \equiv \pm B \equiv B \; \mod \; 2 \Z[i]$.
Denote by $M_2(\Z/2\Z[i])$ the set of all $2\times 2$ matrices with coefficients in $\Z/2\Z[i]$.
Let $h : G_0/G_1 \to (M_2(\Z/2\Z[i]), +)$ be given by $[A] G_1 \mapsto (A-1)$.
The map is well-defined:
indeed if $[A] \sim [B]$ then $[A] = [B + 2N]$ for some $N$ in $M_2(\Z[i])$. So setting $M_A = A-1$ gives $$[B] = [A + 2N] =[1 + M_A + 2N].$$
Hence, $h([A]G_1) = M_A$ and $h([B] G_1) = M_A + 2N$ which is equal to $M_A$ in $M_2(\Z[i]/2\Z[i])$.
Further, the map is a homomorphism:
indeed, if $[A] = [1 + (1-i) M]$ and $[B] = [1 + (1-i) N]$, then
$$
[A][B] = [1 + (1-i)(M +N) - 2i MN] \sim [1 + (1-i) (M + N)].
$$
Finally, the map $h$ is injective since matrices that get mapped to $1$ under $h$ must satisfy $A \equiv 1 \;\mod \;2 \Z[i]$. It follows that $G_0/G_1$ is a $2$-group of order at most $|M_2(\Z/2\Z[i])| = 2^{8}$.

For $k > 1$ we write
$$G_k /G_{k+1} = \{ [A] \in \PSL_2( \Z[i] ) : A \equiv 1 \;\mod\; 2^{k} \Z[i] \}/ \sim,$$
where $[A] \sim [B]$ if $A \equiv \pm B\; \mod\; 2^{k+1} \Z[i]$.
Define a map 
$$g: G_k/G_{k+1} \to (M_2(\Z/2\Z[i]), +)$$
 by $[A] \mapsto (A-1)2^{-k}$.
It is well-defined: indeed if $[A] \sim [B]$ then $A= \pm(B - 2^{k+1} N)$ for some $N$ in $M_2(\Z[i])$. So setting $M_A = (A-1)2^{-k}$ gives $$\pm B = A \pm 2^{k+1} N = 2^k M_A + 1 \pm 2^{k+1} N = 1 + 2^k( M_A \pm 2N).$$
Hence, $h([A] G_{k+1}) = M_A$ and $h([B] G_{k+1}) = M_A + 2N$ which is equal to $M_A$ in $M_2(\Z[i]/2\Z[i])$.

Further, $g$ is a homomorphism:
Indeed, if $[A] = [1 + 2^k M]$ and $[B] = [1 + 2^k N]$, then $[A][B] = [AB]$ becomes
$$
[(1 + 2^k M) (1 + 2^k N )] = [1 + 2^k M + 2^k N + 2^{2k} M N] \sim [1 + 2^k( M + N)],
$$
which maps to $M + N.$
Finally, the map is injective, as $g([A]G_{k+1}) = 0$ implies that $[A] = [1 + 2^{k+1} N] \sim [1]$.
It follows that $G_k/G_{k+1}$ is a two group with order bounded above by $2^{8}$. This gives that $G_0/G_k$ is a $2$-group of order bounded above by $2^{8k}$, as claimed.

Let $\mc{X}$ be a finite set of generators for $\Delta$ as a semigroup and set $S  = \{ 1, i \}$.
We claim that there exists $\lambda>0$ such that for any $[A] \in \Delta$ with $\|[A]\|_{\mc{X}} \leq n$ and any non-zero entry $a \in \Z[i]$ of $A\pm1$ we have
\[ \|a\|_S \leq \lambda^n. \]

To prove the claim, let $a'$ be the entry of $A$ in the same spot as $a$ in $A\pm1$.
We have, by the triangle inequality,
\[ \|a\|_S \leq  \|a'\|_S + \|1\|_S. \]
This reduces the above claim to the following. There exists $\mu>0$ such that for any $A \in G_0$ with $\|A\|_\mc{X}\leq n$ and any non-zero entry $a \in \Z[i]$ of $A$ we have
\[ \|a\|_S \leq \mu^n. \]
We claim that if $\beta$ denotes the maximum of $\|x\|_S$, where $x$ ranges over all entries of all elements of $\mc{X}$, then $\mu := m\beta$ satisfies the last statement. To see this, consider first the case $A=XY$ with $X,Y \in \mc{X}$. The entries of $A$ are scalar products of the rows of $X$ and the columns of $Y$. Thus we are led to study $\|x\cdot y\|_S$ for $x,y \in \Z[i]^m$, where $\cdot$ denotes scalar product. Clearly we have $\|x\cdot y\|_S \leq m\max\{\|x_j y_j\|_S:1 \leq i \leq m\}$. In terms of the basis $S$ we can write
\[ x_j = a_{x}  + b_{x} i \qquad \tx{and}\qquad y_j = a_y + b_y i \]
where each $a_x, b_x, a_y, b_y$ belongs to $\Z$. One computes
\[ \|x_jy_j\|_S \leq \|x_j\|_S\|y_j\|_S. \]
This formula and induction on $n$ complete the proof of the claim.

We now finish the proof. Let $[A]$ be an element in $\Delta$ of word length at most $n$ in terms of $\mc{X}$.
Then, by the above, there exists a $C > 0$ such that any nonzero entry $a$ of $A-1$ or $A+1$ satisfies
$$
\|a \|_S \leq 2^{Cn}.
$$
However, by the definition of $G_k$: if $A$ is nontrivial and in $G_k$, then any nonzero entry $a$ of $A-1$ and $A+1$ has $\|a\|_S \geq 2^k $.
It follows that $[A] \notin G_{Cn}$, hence, as $G_0/G_{Cn}$ is a $2$-group of order at most $2^{8 Cn}$, we have
$$
\D^{\nil}([A]) \leq 2^{8 C n},
$$giving
$$
\DM_{G_0}^{\nil} (n) \preceq 2^n.
$$
\end{proof}

\begin{claim} \label{ReidClaim}
Let $\Delta$ be the kernel of the group homomorphism 
$$\phi: \PSL_2(\Z[i]) \to \SL_2(\Z/2\Z)$$
 induced by the ring homomorphism 
$$\Z[i] \to \Z[i] / \left< (1-i) \right> = \Z/2\Z.$$
Then $\Delta$ contains the fundamental group of a genus 2 surface.
\end{claim}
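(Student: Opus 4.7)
The plan is to exhibit a cocompact Fuchsian subgroup of $\Delta$ whose action on a totally geodesic plane in $\mathbb{H}^3$ has quotient a closed orientable surface of genus $2$; this will embed $\pi_1(\Sigma_2)$ in $\Delta$.

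First I would construct a cocompact Fuchsian subgroup $H \leq \PSL_2(\Z[i])$ as the $\PSL_2(\Z[i])$-stabilizer of the totally geodesic hyperbolic plane $P \subset \mathbb{H}^3$ whose ideal boundary circle in $\C$ is $\{z : |z|^2 = 3\}$. The crucial fact is that $a^2 + b^2 = 3$ has no solution with $a, b \in \Q$ (since $3$ is not a sum of two rational squares, as a descent mod $4$ shows), so this circle meets no point of $\Q(i) \cup \{\infty\}$ and in particular no cusp of the Picard orbifold. Hence $H$ has no parabolic elements while inheriting finite covolume from $\PSL_2(\Z[i])$; together these force $H$ to act cocompactly on $P$. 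Arithmetically, this matches the statement that the invariant quaternion algebra of $H$, determined by the Hilbert symbol $(-1, 3)_{\Q}$, is a $\Q$-division algebra.

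Next I would set $H_0 := H \cap \Delta$, still a cocompact Fuchsian subgroup of finite index in $H$, and apply Selberg's lemma to extract a torsion-free finite-index subgroup $S \leq H_0 \leq \Delta$. Such an $S$ is isomorphic to the fundamental group of a closed orientable hyperbolic surface of some genus $g \geq 2$.

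The final step is to arrange $g = 2$ exactly, which I expect to be the delicate part. Since every finitely generated, infinite-index subgroup of a closed surface group is free, $\pi_1(\Sigma_2)$ can embed in $\pi_1(\Sigma_g)$ only when $g = 2$; one cannot cut down the genus by passing to further subgroups of $S$. Instead I would compute the orbifold Euler characteristic $\chi(H \backslash P)$ via Borel's volume formula, and then use residual finiteness of $H$ together with Selberg's lemma to locate a torsion-free finite-index subgroup of $H_0$ whose index $d$ satisfies $d \cdot \chi(H \backslash P) = -2$, while remaining inside $\Delta$. This index matching, subject to congruence constraints coming from membership in $\Delta$, is the main obstacle I anticipate; it should yield to an explicit arithmetic calculation with the quaternion order attached to $(-1, 3)_{\Q}$.
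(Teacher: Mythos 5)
Your approach is in the same spirit as the paper's: stabilize a circle whose stabilizer in $\PSL_2(\Z[i])$ is a cocompact arithmetic Fuchsian group, intersect with the congruence kernel $\Delta$, and pass to a torsion-free finite-index subgroup. You even correctly invoke the right criterion (the circle $|z|^2 = 3$ avoids cusps because $3$ is not a sum of two rational squares, so the stabilizer is cocompact). However, there is a genuine gap in the last step, and it is not a detail that can be deferred: you have not computed the orbifold Euler characteristic, and the feasibility of the whole argument hinges on that number being small enough.

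The point you yourself raise cuts against you. Since finite-index subgroups of a genus $g$ surface group have genus $1 + d(g-1) \geq g$, and infinite-index finitely generated subgroups are free, the Euler characteristic can only become more negative under passage to subgroups. If $\chi_{\mathrm{orb}}(H_0 \backslash P) < -2$ for your $H_0 = H \cap \Delta$, then no finite-index subgroup of $H_0$ is a genus $2$ surface group, and the construction fails outright -- you would only prove $\Delta$ contains some higher-genus surface group, which does not imply it contains $\pi_1(\Sigma_2)$. Thus the choice of circle is not free; it must be arranged so that $\chi_{\mathrm{orb}}$ of the intersection with $\Delta$ is exactly $-1$ or $-2$ with the torsion structure permitting an index-$d$ torsion-free subgroup at Euler characteristic $-2$. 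You flag this as ``the delicate part'' and ``the main obstacle I anticipate,'' but you do not resolve it; without it the proof is incomplete.

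The paper (following Reid) handles exactly this. It chooses the circle $2|z|^2 + (1+i)z + (1-i)\overline{z} - 2 = 0$, whose stabilizer $F$ is known (Maclachlan--Reid) to have signature $(0;2,2,3,3;0)$, hence $\chi_{\mathrm{orb}}(F) = -1/3$. Explicit generators $x_1,\dots,x_4$ are exhibited; reducing mod $(1+i)$ shows the image of $F$ is cyclic of order $3$, so $[F : F\cap\Delta] = 3$ and $F \cap \Delta$ has signature $(0;2,2,2,2,2,2;0)$ with $\chi_{\mathrm{orb}} = -1$. That group visibly has an index-$2$ torsion-free subgroup (map each of the six order-$2$ generators to $1 \in \Z/2\Z$; the product relation is preserved since $6$ is even), which is then a genus $2$ surface group sitting inside $\Delta$. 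This Euler characteristic bookkeeping is the entire content of the claim, and it is precisely what your write-up leaves undone.
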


\begin{proof}
Let $d$ be a square-free postive integer, $\mathcal{O}_d$ the ring of integers
of the quadratic imaginary number field $\Q(\sqrt{-d})$ and
$\Gamma_d$ the Bianchi group $\PSL_2(\mathcal{O}_d)$.
It was shown by Maclachlan (see Chapter 9.6 of \cite{MR}) that for any circle
$\mathcal C$ with equation:

$$a|z|^2 + Bz + \overline{B}\overline{z} + c = 0,~~\hbox{with}~~a,c\in \Z, B\in \mathcal{O}_d$$

\noindent
the group

$$\Stab({\mathcal C},\Gamma_d)=\{\gamma\in\Gamma_d:\gamma({\mathcal C}) ={\mathcal C}~~
\hbox{and}~~\gamma~~\hbox{preserves components of}~~{\bf C}\setminus {\mathcal C}\}$$
\noindent
is an arithmetic Fuchsian subgroup of $\Gamma_d$.  Moreover, all such arithmetic
Fuchsian subgroups occur like this.

We now fix attention on the case of $d=1$ and the circle ${\mathcal C}$ with equation:

$$2|z|^2 + (1+i)z + (1-i)\overline{z} - 2 = 0.$$
\noindent
Denote the group $\Stab({\mathcal C})$ by $F$.  It is shown in \cite{MR1} Theorem
8 that this is an arithmetic Fuchsian group of signature $(0;2,2,3,3;0)$.
With a bit of effort, explicit generators for $F$ can be computed, namely:
$$\tiny{x_1= \begin{pmatrix} i & 1+i \cr 0 & -i \end{pmatrix}, \:
x_2=\begin{pmatrix} -1+2i & 3+i \cr 1+i & 1-2i \end{pmatrix},\:
x_3= \begin{pmatrix} 2i & 3+2i \cr 1 & 1-2i \end{pmatrix}, \:
x_4= \begin{pmatrix} 1+2i & 2+3i \cr -i & -2i \end{pmatrix}.}$$
\noindent
Now let $\Gamma$ denote the principal congruence subgroup of $\Gamma_1$
of level $\left<(1+i) \right>$.  To determine the group $F\cap \Gamma$ we consider the reduction
of the generators of $F$ modulo $\left<(1+i)\right>$.  It is easily seen that 
(projectively) $x_1$ and $x_2$ map trivially with $x_3$ and $x_4$ 
mapping to the elements
$$\begin{pmatrix} 0 & 1 \cr 1 & 1 \end{pmatrix}~~\hbox{and}~~\begin{pmatrix} 1 & 1 \cr 1 & 0 \end{pmatrix},$$
\noindent
respectively.  We deduce from this that the image of $F$ under the
reduction homomorphism is cyclic of order $3$, and so it follows
that since each of $x_1$ and $x_2$ is killed, they
determine 3 conjugacy classes
of cyclic groups of order $2$ in $F\cap \Gamma$.  
Given that the index $[F:F\cap \Gamma]=3$,
we deduce that 
the signature of the Fuchsian group $F\cap \Gamma$
is $(0;2,2,2,2,2,2;0)$.  Any such group has a genus $2$ surface group.  In
summary we have shown that the level $\left<(1+i) \right>$ principal congruence subgroup of $\PSL_2(\mathcal{O}_1)$
contains a genus $2$ surface group.
\end{proof}

We now prove the upper bound in Theorem \ref{freegrouptheorem}:

\begin{corollary}
Let $\Gamma$ be the fundamental group of an oriented surface.
Then $\DM^{\nil}_\Gamma(n) \preceq 2^n$.
\end{corollary}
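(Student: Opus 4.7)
The plan is to reduce the general case to the genus-two case, then chain Claims \ref{TwoGroupClaim} and \ref{ReidClaim} together via Lemma \ref{DivisibilityAsymptoticLemma}. The low-genus cases are not interesting: if the surface is a sphere then $\Gamma$ is trivial, and if it is a torus then $\Gamma \cong \Z^2$ is itself nilpotent, so $\DM^{\nil}_\Gamma$ is at most polynomial, hence comfortably $\preceq 2^n$.

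For genus $g \geq 2$, I would use that every closed oriented surface of genus $g$ is a degree $g-1$ cover of the closed oriented surface of genus two. Concretely, the genus-two surface group surjects onto $\Z/(g-1)\Z$ via its abelianization, and the kernel has index $g-1$; by Riemann--Hurwitz its associated cover has Euler characteristic $2-2g$, so the cover must be the genus-$g$ surface. Therefore $\Gamma$ embeds as a finitely generated, finite-index subgroup of the genus-two surface group. Since nilpotency is inherited by subgroups, Lemma \ref{DivisibilityAsymptoticLemma} gives
\[ \DM^{\nil}_{\Gamma}(n) \preceq \DM^{\nil}_{\Gamma_2}(n), \]
where $\Gamma_2$ denotes the fundamental group of a closed oriented surface of genus two.

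Finally, Claim \ref{ReidClaim} embeds $\Gamma_2$ as a finitely generated subgroup of the congruence kernel $\Delta < \PSL_2(\Z[i])$, so a second application of Lemma \ref{DivisibilityAsymptoticLemma} yields $\DM^{\nil}_{\Gamma_2} \preceq \DM^{\nil}_\Delta$. Combining with Claim \ref{TwoGroupClaim}, which supplies $\DM^{\nil}_\Delta(n) \preceq 2^n$, we conclude $\DM^{\nil}_\Gamma(n) \preceq 2^n$, as desired. The genuine work has already been carried out in the two preceding claims (the filtration of $\Delta$ by finite-index normal subgroups whose successive quotients are $2$-groups, and the arithmetic embedding of a genus-two surface group into $\Delta$); the final assembly presented here is purely formal and poses no real obstacle.
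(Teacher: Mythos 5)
Your proposal is correct and takes essentially the same route as the paper: chain Claim \ref{ReidClaim}, Claim \ref{TwoGroupClaim}, and Lemma \ref{DivisibilityAsymptoticLemma}. The only substantive difference is that you make explicit the reduction from arbitrary genus to genus two (dispatching the sphere and torus separately, and embedding $\pi_1(S_g)$ into $\pi_1(S_2)$ as the index-$(g-1)$ subgroup of an unramified cover, justified by Riemann--Hurwitz), whereas the paper's two-line proof simply asserts that the kernel $\Delta$ ``contains $\Gamma$'' and leaves this reduction implicit -- so your write-up is a small but genuine improvement in rigor over the published version.
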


\begin{proof}
By Claim \ref{ReidClaim}, the kernel in Claim \ref{TwoGroupClaim} contains $\Gamma$.
Hence the corollary follows from Lemma \ref{DivisibilityAsymptoticLemma}.
\end{proof}

\begin{corollary} \label{MainCorollary2}
Let $\Gamma$ be the fundamental group of an oriented surface with generating set $\mc{X}$.
Then there exists a constant $C > 0$ such that any $\gamma \in \Gamma$ satisfies $\gamma \notin \Gamma_{C\| \gamma\|_\mc{X}}$.
\end{corollary}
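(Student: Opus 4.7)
The plan is to couple the upper bound $\DM^{\nil}_\Gamma(n) \preceq 2^n$ from the previous corollary with the elementary observation that detecting a nontrivial element of $\Gamma_k$ in a finite nilpotent quotient requires that quotient to have nilpotency class at least $k$, and hence order at least $2^{k+1}$. First I would fix a nontrivial $\gamma \in \Gamma$ and let $k_\gamma := \max\{ k : \gamma \in \Gamma_k\}$; this is a well-defined nonnegative integer because oriented surface groups are residually nilpotent (the torus case $\Z^2$ is vacuous since $\Gamma_1 = 1$, and genus $\geq 1$ is classical, following Magnus for free groups and known for surface groups). Note also the statement should be read for nontrivial $\gamma$, since the identity lies in every $\Gamma_k$.

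The second step is the lower bound $\D^{\nil}_\Gamma(\gamma) \geq 2^{k_\gamma+1}$. Indeed, if $\pi : \Gamma \twoheadrightarrow Q$ is a homomorphism onto a finite nilpotent group with $\pi(\gamma) \neq 1$, then by functoriality $\pi(\Gamma_{k_\gamma}) \subseteq Q_{k_\gamma}$, so $Q_{k_\gamma} \neq 1$, forcing the class of $Q$ to be at least $k_\gamma$. Since in that case each of the $k_\gamma + 1$ factors $Q_i/Q_{i+1}$ (for $0 \leq i \leq k_\gamma$) is a nontrivial abelian group, $|Q| \geq 2^{k_\gamma + 1}$. This is the only slightly nontrivial ingredient, and it follows immediately from the strict descent of the lower central series at each step.

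Finally, unwinding the asymptotic upper bound, the previous corollary gives a natural number $M$ with $\D^{\nil}_\Gamma(\gamma) \leq \DM^{\nil}_\Gamma(\|\gamma\|_{\mc{X}}) \leq M \cdot 2^{M\|\gamma\|_{\mc{X}}}$ for every nontrivial $\gamma$. Combining this with the lower bound from the previous step and taking $\log_2$ yields
\[
k_\gamma + 1 \leq M\|\gamma\|_{\mc{X}} + \log_2 M.
\]
Since $\|\gamma\|_{\mc{X}} \geq 1$ for nontrivial $\gamma$, setting $C := M + \lceil \log_2 M \rceil + 1$ gives $k_\gamma < C\|\gamma\|_{\mc{X}}$, which is exactly the statement $\gamma \notin \Gamma_{C\|\gamma\|_{\mc{X}}}$. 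There is no substantive obstacle: the whole argument is a one-line deduction from the previous corollary once the order estimate $|Q| \geq 2^{k+1}$ is in hand.
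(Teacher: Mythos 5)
Your proposal is correct and follows essentially the same route as the paper: both use the upper bound $\DM^{\nil}_\Gamma(n) \preceq 2^n$ together with the elementary count that a finite nilpotent group with a nontrivial $k$th lower-central term must have order at least $2^{k+1}$, and then conclude via functoriality of the lower central series. Your version merely packages the argument as a lower bound $\D^{\nil}_\Gamma(\gamma) \geq 2^{k_\gamma+1}$ on the detection function in terms of the depth $k_\gamma$, which is a clean reformulation of the paper's contrapositive but not a different idea; your aside about residual nilpotence is fine, though it is already implicit in the finiteness of $\DM^{\nil}_\Gamma$ supplied by the preceding corollary, and the small slip of saying "class at least $k_\gamma$" rather than "$k_\gamma+1$" does not affect the final estimate.
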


\begin{proof}
By the previous corollary, we have that there exists $C > 0$ such that $\DM^{\nil}_{\Gamma,\mc{X}}(n) < 2^{C n}$ for all $n$.
Hence, for any $\gamma \in \Gamma \sm \{1\}$, we have $\D^{\nil}_\Gamma(\gamma) \leq 2^{C  \| \gamma \|_\mc{X} }$.
But any finite group of nilpotent class no less than $C \| \gamma \|_\mc{X}$, must have order no less than $2^{C \| \gamma \|_\mc{X}}$, hence $\gamma \notin \Gamma_{C \| \gamma \|_\mc{X}+1}$, which is sufficient.
\end{proof}

\section{Some examples} \label{ExampleSection}

In this section we show that better lower bounds can be found for some groups $\Gamma$ where $[\Gamma,\Gamma]$ is not finitely generated.

\begin{example} \label{ex:example1}
Let $p$ be a prime number.
Let $\Gamma$ be the group $\Z/p\Z \wr \Z$. Set $\Delta = \oplus_{i \in \Z} \Z/p\Z$ to be the base group of $\Gamma$ so $\Gamma/\Delta \cong \Z$.
We claim that $\DM_\Gamma(n) \succeq \sqrt{n}$.
\end{example}

\begin{proof}
We begin with a linear algebraic construction of candidates for a lower bound.
Let $A$ be the $2m \times m$ matrix with entries in $\Z/p\Z$ given by
\begin{equation} \label{matrixA}\tiny{
\begin{pmatrix}
1 & 1 & 1 & 1 & 1 & \cdots  \\
1 & 0 & 1 & 0 & 1 & \cdots  \\
0 & 1 & 0 & 1 & 0 & \cdots  \\
1 & 0 & 0 & 1 & 0 & \cdots  \\
0 & 1 & 0 & 0 & 1 & \cdots  \\
0 & 0 & 1 & 0 & 0 & \cdots \\
1 & 0 & 0 & 0 & 1 & \cdots \\
\vdots &  & & & &
\end{pmatrix}.}
\end{equation}
Set $m = n(n+1)/2$.
This matrix gives a linear transformation from $(\Z/p\Z)^{2m}$ to $(\Z/p\Z)^{m}$.
Since the cardinality of $(\Z/p\Z)^{2m}$ is greater than that of $(\Z/p\Z)^m$, we have that $\ker(A)$ is nontrivial. 
Fix $w = (w_1, \ldots, w_{2m})$ to be a nontrivial element in $\ker(A)$.
Let $\delta_i$ be the Dirac delta function giving an element in $\Delta$.
Set $v$ to be the element 
$$
\sum_{i=1}^{2m} w_i \delta_i.
$$
This element $v$ is our candidate.

Let $t$ be the generator for $\Z$ in the wreath product of $\Z/p\Z \wr \Z$.
It is straightforward to see that the element $v$ has word length less than $2m(p+2)$.
Let $\phi: \Gamma \to Q$ be a map onto a finite $Q$ such that $v \notin \ker \phi$.
Then, supposing that $\phi(t^r) = 1$ for $r < n$ gives
$$
\phi(v) = \phi \left( \sum_{i=1}^{2m} w_i \delta_i  \right)
= \phi \left( \sum_{i=1}^{r-1} (w_i + w_{i+r} + w_{i+2r} + \cdots )  \delta_i \right)= 0.
$$
Hence, $r \geq n$, and so $|Q| \geq n$, giving $\normF_\Gamma(n^2) \succeq n$ and hence $\normF_\Gamma(n) \succeq n^{1/2}$ as desired.
\end{proof}

\noindent
The above method strengthens Example 2.3 in \cite{BK10}.

\begin{example} \label{ex:example2}
Let $\Gamma$ be the wreath product $\Z \wr \Z$. Set $\Delta = \oplus_{i \in \Z} \Z$ to be the base group of $\Gamma$ so $\Gamma/\Delta \cong \Z$.
We claim that $\DM_\Gamma(n) \succeq n^{1/4}$.
\end{example}

\begin{proof}
We begin with a linear algebraic construction of candidates for a lower bound.
As in the above proof, let $A$ be the $2m \times m$ matrix, this time with entries in $\Z$ given by
Expression \ref{matrixA}.
Set $m = n(n+1)/2$, for $n$ even.
This matrix gives a map from $V:= \Z^{2m}$ to $W:=\Z^{m}$.
Let $\| \cdot \|_{\Z^\ell}$ be the supremum norm on $\Z^\ell$.
By the triangle inequality and the definition of $A$, one sees that for any $v \in V$, we have
$$
\| Av \|_W \leq m \| v \| _V.
$$
Further, if we let $B_{\Z^\ell}(k) = \{ v \in \Z^\ell : \| v \|_{\Z^\ell} \leq k \}$, then $AB_V(k) \subseteq B_W(mk)$ for all $k$.
Moreover, for all even $k \in \N$,
\begin{eqnarray*}
 |B_W(k/2)| &=& (k+1)^m, \text{ and } \\
|B_V(k/2)| &=& (k+1)^{2m}.
\end{eqnarray*}
\noindent
If $k = (m+2)$, then 
$$|B_V(k/2)| = (k+1)^{2m} > ((m+3)k)^m = (km + 3k)^m > (mk+1)^m = |B_W(mk/2)|.$$
Hence, as $AB_V(k/2) \subseteq B_W(mk/2)$, there must exist an element $w \in \ker(A) \cap B_V(k)$ that is nontrivial.
Fix such a $w$, and write $w = (w_1, \ldots, w_{2m})$.
Let $\delta_i$ be the Dirac delta function giving an element in $\Delta$.
Set $v$ to be the element 
$$
\sum_{i=1}^{2m} w_i \delta_i.
$$
This element $v$ is our candidate.
It is straightforward to show that the element $v$ has word length less than $2m(k+2)$.

Let $t$ be the canonical generator for the wreath product of $\Z \wr \Z$ whose image generates $\Gamma/\Delta$.
Let $\phi: \Gamma \to Q$ be a map onto a finite $Q$ such that $v \notin \ker \phi$.
Then, supposing that $\phi(t^r) = 1$ for $r < n$ gives
$$
\phi(v) 
= \phi \left( \sum_{i=1}^{2m} w_i \delta_i \right)
= \phi \left( \sum_{i=1}^{r-1} (w_i + w_{i+r} + w_{i+2r} + \cdots ) \delta_i \right)= 0.
$$
Hence, $r \geq n$, and so $|Q| \geq n$, giving $\normF_\Gamma(n^4) \succeq n$ and hence $\normF_\Gamma(n) \succeq n^{1/4}$ as desired.
\end{proof}


\end{document}